\theoremstyle{plain}
\newtheorem{theorem}{Theorem}[section]
\newtheorem{lemma}[theorem]{Lemma}
\newtheorem{conjecture}[theorem]{Conjecture}
\theoremstyle{definition}
\newtheorem{definition}[theorem]{Definition}
\theoremstyle{remark}
\newtheorem{remark}[theorem]{Remark}
\newcommand{\N}{\mathbb{N}}                   
\newcommand{\dl}{\delta}                      
\newcommand{\dlp}{\dl^+}                      
\newcommand{\dlm}{\dl^-}                      
\numberwithin{equation}{section}
\begin{document}

\title{A degree condition for cycles of maximum length in bipartite digraphs}
\author{Janusz Adamus}
\address{J.Adamus, Department of Mathematics, The University of Western Ontario, London, Ontario N6A 5B7 Canada,
         and Institute of Mathematics, Jagiellonian University, ul. {\L}ojasiewicza 6, 30-348 Krak{\'o}w, Poland}
\email{jadamus@uwo.ca}
\author{Lech Adamus}
\address{L. Adamus, Faculty of Applied Mathematics, AGH University of Science and Technology, al. Mickiewicza 30, 30-059 Krak{\'o}w, Poland}
\email{adamus@agh.edu.pl}
\thanks{The authors' research was partially supported by Natural Sciences and Engineering Research Council of Canada (J. Adamus) and Polish Ministry of Science and Higher Education (L. Adamus).}
\subjclass[2000]{Primary 05C20, 05C38; Secondary 05C45}
\keywords{digraph, bipartite digraph, cycle, hamiltonicity}

\begin{abstract}
We prove a sharp Ore-type criterion for hamiltonicity of balanced bipartite digraphs:
For $a\geq2$, a bipartite digraph $D$ with colour classes of cardinalities $a$ is hamiltonian if $d^+(u)+d^-(v)\geq a+2$ whenever $u$ and $v$ lie in opposite colour classes and $uv\notin A(D)$.
\end{abstract}
\maketitle


\section{Introduction}
\label{sec:intro}

The main purpose of this note is to give a sharp Ore-type sufficient condition for hamiltonicity of balanced bipartite digraphs.
A \emph{digraph} $D$ is a pair $(V(D),A(D))$, where $V(D)$ is a finite set (of \emph{vertices}) and $A(D)$ is a set of ordered pairs of elements of $V(D)$, called \emph{arcs}. For vertices $u$ and $v$ from $V(D)$, we write $uv\in A(D)$ to say that $A(D)$ contains the ordered pair $(u,v)$.
For a vertex $v\in V(D)$, we denote by $d^+(v)$ (resp. $d^-(v)$) the number of vertices $u\in V(D)$ such that $vu\in A(D)$ (resp. $uv\in A(D)$). We call $d^+(v)$ and $d^-(v)$ the \emph{positive} and \emph{negative half-degree} of $v$, respectively.
Further, $\dlp(D)$ (resp. $\dlm(D)$) denotes the minimum of $d^+(v)$ (resp. $d^-(v)$) as $v$ runs over all vertices of $D$.
A digraph $D$ is \emph{bipartite} when $V(D)$ is a disjoint union of sets $X$ and $Y$ (the \emph{colour classes}) such that $A(D)\cap(X\times X)=\varnothing$ and $A(D)\cap (Y\times Y)=\varnothing$. It is called \emph{balanced} if $|X|=|Y|$.
See subsection~\ref{subsec:not} below for details on notation and terminology.

\begin{definition}
\label{def:Bk}
Consider a balanced bipartite digraph $D$ with colour classes $X$ and $Y$ of cardinalities $a$. For $k\geq0$, we will say that $D$ satisfies condition $A^*_k$ when
\[
d^+(u)+d^-(v)\geq a+k
\]
for all $u$ and $v$ from opposite colour classes such that $uv\notin A(D)$.
\end{definition}

Our main result is the following:

\begin{theorem}
\label{thm:main}
Let $D$ be a balanced bipartite digraph with colour classes $X$ and $Y$ of cardinalities $a$, where $a\geq2$.
If $D$ satisfies condition $A^*_2$, then $D$ contains an oriented cycle of length $2a$.
\end{theorem}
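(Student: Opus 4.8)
The plan is to argue by contradiction via a longest cycle. Suppose $D$ has no cycle of length $2a$, and let $C$ be a longest cycle, say of length $2m$ with $m<a$, written according to its orientation as $C=x_1 y_1 x_2 y_2\cdots x_m y_m x_1$ with $x_i\in X$ and $y_i\in Y$. Before the main argument I would dispose of two preliminaries. First, that a cycle exists at all and that $m\ge 1$: condition $A^*_2$ forces $d^+(u)+d^-(v)\ge a+2$ across every non-arc between the classes, which quickly yields arcs in both directions (a colour class with no outgoing arcs would force some negative half-degree to exceed $a$) and hence a short cycle to start from. Second, I would establish that $D$ is strongly connected, since a separation would place an entire colour class on one side of a cut and violate $A^*_2$. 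With $2m<2a$ and $D$ balanced, the outside set $R=V(D)\setminus V(C)$ meets both classes, with $|R\cap X|=|R\cap Y|=a-m\ge 1$.

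The engine of the proof is a cycle-extension (insertion) argument, and the essential feature of the bipartite setting is that every lengthening must absorb one vertex of each colour. I would therefore fix $w\in R\cap X$ and $z\in R\cap Y$ and try to splice the pair into $C$ so as to produce a cycle of length $2m+2$, contradicting maximality. The cleanest situation is when an arc already joins $w$ and $z$: if, say, $z\to w\in A(D)$, then the detour replacing the arc $x_i\to y_i$ by the path $x_i\to z\to w\to y_i$ is valid as soon as $x_i\to z$ and $w\to y_i$ both hold (the parities match, since $x_i,w\in X$ and $z,y_i\in Y$), with a symmetric detour available on each arc $y_i\to x_{i+1}$. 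To force such an index $i$ to exist I would record, along $C$, the in-neighbours of $z$ among $\{x_1,\dots,x_m\}$ and the out-neighbours of $w$ among $\{y_1,\dots,y_m\}$, and run a pigeonhole argument: $A^*_2$ makes these neighbour sets so large that, measured against the $m$ available positions, a compatible index must occur unless the positions are blocked in a very rigid pattern.

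The principal obstacle is the case in which $w$ and $z$ are non-adjacent in both directions, which is exactly where $A^*_2$ must be used twice. Here no single detour applies, so I would pass to a double insertion: cut $C$ at two arcs and reconnect through $w$ and through $z$ separately, which forces $w$ to sit between two $Y$-vertices and $z$ between two $X$-vertices of the rerouted cycle. Since $w$ and $z$ are non-adjacent we have both $d^+(w)+d^-(z)\ge a+2$ and $d^+(z)+d^-(w)\ge a+2$, and in the case where the outside set is independent (in particular when $m=a-1$) every neighbour of $w$ and of $z$ lies on $C$, so all four half-degrees are bounded by $m$; this already forces $m$ to be large, and—pushed further—lets a count over the $m$ positions of each class produce more required incidences than there are slots, giving the contradiction. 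The delicate bookkeeping lies in (i) the blocking patterns that make every candidate index fail, (ii) the endpoint parity when one inserts a longer outside path rather than a single pair (relevant for $a-m\ge 2$, where I would take a longest path of $D[R]$ with endpoints in opposite classes and insert it wholesale), and (iii) the boundary case $m=a-1$ with a single outside pair. I expect to organize the write-up around a self-contained insertion lemma, to the effect that the absence of an admissible detour forces a precise deficiency in the relevant neighbour sets, from which the numerical contradiction with $A^*_2$ follows once $m<a$.
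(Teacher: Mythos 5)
Your outline follows the generic ``longest cycle plus insertion'' strategy, but the steps you defer are exactly where the proof is hard, and at least two of them fail as described. First, a parity obstruction: in a bipartite digraph you cannot insert a single vertex into a cycle (every arc of $C$ joins $X$ to $Y$, so a detour of length $2$ through one new vertex would need an arc inside a colour class), and your ``double insertion'' of a mutually non-adjacent pair $w\in X$, $z\in Y$ by cutting $C$ at two arcs is likewise impossible: on the new cycle $w$ must lie between two $Y$-vertices and $z$ between two $X$-vertices, so deleting $w$ and $z$ leaves one path from $Y$ to $X$ and one from $X$ to $Y$ covering $V(C)$, and these cannot both be forward segments of $C$. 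Any lengthening that absorbs a non-adjacent pair must break $C$ into at least three segments and re-splice them; that three-segment rearrangement (the cycle $[u_1,\dots,v_p,x_{j_0},\dots,y_s,x_{i_0+1},\dots,y_{j_0-1},x_{s+1},\dots,y_{i_0}]$ in the paper's Subcase B.2) is the key construction of the whole proof, and your proposal does not contain it. Second, your pigeonhole for the adjacent pair $z\to w$ needs $d^+_{V(C)}(w)+d^-_{V(C)}(z)>m$; condition $A^*_2$ only bounds the full half-degrees, and up to $2(a-m)$ neighbours can lie off $C$, so you only get $2m-a+2$, which exceeds $m$ only when $m\geq a-1$. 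The argument therefore collapses whenever more than one pair of vertices lies outside $C$, and it gives nothing at all when $w$ and $z$ are joined in both directions (then $A^*_2$ imposes no constraint on that pair).

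The missing structural idea is the one the paper builds everything on: condition $A^*_0$ already yields a complete matching $M$ from $X$ to $Y$ (K{\"o}nig--Hall), and restricting attention to cycles and paths \emph{compatible with} $M$ guarantees (i) a starting cycle of length at least $a+2$ (Lemma~\ref{lem:3}), so the outside set is always small; (ii) that $D\setminus V(C)$ always contains a compatible path of even order starting in $X$ and ending in $Y$ (Remark~\ref{rem:1}) -- whereas in your setup $D[R]$ could be arcless or its longest path could have both endpoints in the same class, leaving nothing to insert; and (iii) the upper bound $d^+_{V(C)}(v_p)+d^-_{V(C)}(u_1)\leq m-p+1$ of Lemma~\ref{lem:4}, which is used to \emph{contradict} $A^*_2$ directly rather than to pigeonhole an insertion point. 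Without a substitute for this machinery -- in particular for the cases where the outside set is large or independent, and for the three-segment exchange -- your plan does not close.
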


There are numerous sufficient conditions for existence of cycles in digraphs (see \cite{BT}). In this note, we will be concerned with the degree conditions.
For general digraphs, the Dirac- and Ore-type conditions are due, respectively, to Nash-Williams and Woodall.

\begin{theorem}[{Nash-Williams, \cite{NW}}]
\label{thm:NW}
Let $D$ be a digraph on $n$ vertices, where $n\geq3$. If $\dlp(D)\geq n/2$ and $\dlm(D)\geq n/2$, then $D$ contains an oriented cycle of length $n$.
\end{theorem}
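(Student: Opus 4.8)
The plan is to prove the statement directly under the two half-degree hypotheses, in three movements: first reduce to a strongly connected digraph, then fix a longest oriented cycle, and finally force it to be spanning by a rerouting/counting argument.

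\emph{Strong connectivity.} I would begin by showing $D$ is strongly connected. Pass to the condensation of $D$, the acyclic digraph whose nodes are the strongly connected components of $D$. This acyclic digraph has at least one source component $S$ (no incoming arcs from other components) and at least one sink component $T$ (no outgoing arcs). Every vertex $v$ of a sink component sends all of its out-arcs inside that component, so $d^+(v)\le |T|-1$ and hence $|T|\ge \dlp(D)+1\ge n/2+1$; dually $|S|\ge \dlm(D)+1\ge n/2+1$. If $D$ were not strong, the condensation would have at least two nodes and we could choose such $S$ and $T$ with $S\ne T$, whence $|S|+|T|\ge n+2>n$, a contradiction. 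Thus $D$ is strong.

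\emph{A longest cycle and the splicing mechanism.} Let $C$ be an oriented cycle of maximum length $c$, and suppose for contradiction that $c<n$, so $R=V(D)\setminus V(C)\ne\varnothing$. Since $D$ is strong, I would take a longest oriented path $Q=y_0y_1\cdots y_m$ inside the subdigraph induced by $R$. Writing $N^+$ and $N^-$ for out- and in-neighbourhoods, maximality of $Q$ forces $N^-(y_0)\subseteq V(C)\cup\{y_1,\dots,y_m\}$ and $N^+(y_m)\subseteq V(C)\cup\{y_0,\dots,y_{m-1}\}$. Put $A=N^-(y_0)\cap V(C)$ and $B=N^+(y_m)\cap V(C)$. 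The crucial observation is that if $C$ carries an arc $c_ic_{i+1}$ with $c_i\in A$ and $c_{i+1}\in B$, then deleting that arc and inserting the detour $c_i\to y_0\to y_1\to\cdots\to y_m\to c_{i+1}$ yields an oriented cycle of length $c+(m+1)>c$, contradicting maximality. Letting $A^+$ denote the set of $C$-successors of $A$, such an arc is absent only when $A^+\cap B=\varnothing$, and since $|A^+|=|A|$ this means $|A|+|B|\le c$.

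\emph{The counting, and the main obstacle.} It remains to contradict $|A|+|B|\le c$, and this is the heart of the matter. The half-degree hypotheses give $|A|\ge \dlm(D)-m$ and $|B|\ge \dlp(D)-m$, so $|A|+|B|\ge n-2m$; the crude bound $m\le |R|-1=n-c-1$ then only yields $|A|+|B|\ge 2c+2-n$, which exceeds $c$ just when $c\ge n-1$. The difficulty, which I expect to be the principal obstacle, is to sharpen this estimate for all shorter $c$. The leverage I have not yet spent is the internal vertices $y_1,\dots,y_{m-1}$: each also sends many arcs to $C$, and for the \emph{longest} cycle no outside vertex $y_j$ can satisfy $c_k\to y_j$ and $y_j\to c_{k+1}$ for a single $C$-arc $c_kc_{k+1}$ (otherwise insert $y_j$ to enlarge $C$). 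I would therefore argue that either such a direct insertion of some $y_j$ exists—already enlarging $C$—or else the collective absence of all insertions pins down the successor structure of $A$ so tightly that $|A|+|B|>c$ after all. Making this simultaneous insertion-and-counting precise, by choosing $C$ and $Q$ extremally so that both half-degree conditions are charged against the arcs of $C$ at the same time, is where the real work lies; once it is in place, the contradiction establishes $c=n$ and $C$ is the desired oriented cycle of length $n$.
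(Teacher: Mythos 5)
This statement is quoted in the paper as a classical result of Nash-Williams (reference \cite{NW}); the paper itself contains no proof of it, so there is nothing internal to compare your argument against, and it must be judged on its own. Your first two movements are sound: the condensation argument for strong connectivity is correct, the whole-path splicing observation correctly yields $|A|+|B|\leq c$ for a longest cycle $C$, and the degree counting correctly yields $|A|+|B|\geq n-2m$. But, as you yourself concede, these two bounds collide only when $m$ is small; using the only available bound $m\leq n-c-1$, the contradiction $n-2m>c$ materializes precisely when $c\geq n-1$. For any shorter longest cycle the argument simply does not close, and everything you offer to bridge this --- ``either a single vertex $y_j$ can be inserted into $C$, or the absence of all insertions pins down the successor structure so tightly that $|A|+|B|>c$ after all'' --- is a statement of intent, not an argument. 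The second alternative is exactly the theorem's hard core, and it is not true that it falls out of the bookkeeping you have set up: the forbidden-configuration facts you list (no arc of $C$ spans $A\times B$, no single $y_j$ sits astride an arc of $C$) are compatible with configurations in which $|A|+|B|$ is far below $c$, because the in-neighbours of $y_0$ and out-neighbours of $y_m$ on $C$ can cluster in long blocks whose successors avoid $B$ without any contradiction arising from single insertions alone.

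The deeper reason this is not a routine patch is that the rotation/reversal tricks that make Ore- and Dirac-type arguments short for undirected graphs are unavailable in digraphs: one cannot traverse a segment of $C$ backwards. Published proofs of this theorem, or of the stronger Meyniel theorem (Theorem \ref{thm:Mey} of this paper, with the short proof of Bondy and Thomassen cited as \cite{BonT}), must instead perform a considerably more delicate charging argument, counting arcs between an outside path and consecutive segments of the cycle simultaneously for several outside vertices, with a careful induction. One honest way to finish along your lines would be to invoke Meyniel's theorem directly: your strong-connectivity step plus $d(x)+d(y)=d^+(x)+d^-(x)+d^+(y)+d^-(y)\geq 2n\geq 2n-1$ for \emph{all} pairs shows its hypotheses hold; but that merely relocates the entire difficulty into a theorem you would still have to prove. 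As it stands, your proposal is a correct reduction to, and a clear identification of, the key counting obstacle --- not a proof.
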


\begin{theorem}[{Woodall, \cite{W}}]
\label{thm:W}
Let $D$ be a digraph on $n$ vertices, where $n\geq3$. If $d^+(x)+d^-(y)\geq n$ for every pair of distinct vertices $x,y\in V(D)$ satisfying $xy\notin A(D)$, then $D$ contains an oriented cycle of length $n$.
\end{theorem}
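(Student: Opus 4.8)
The plan is to prove hamiltonicity of $D$ by the standard two-stage route for Ore-type degree conditions on digraphs: first show that $D$ is strongly connected, and then show that a longest oriented cycle must already be spanning.

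\textbf{Step 1 (strong connectivity).} Suppose $D$ is not strong. Its condensation is a nontrivial acyclic digraph, so a topological ordering of the strong components produces a first component $S$ (a source) and a last component $T$ (a sink), which are distinct. Pick $x\in T$ and $y\in S$. Since $T$ is a sink, no arc leaves $T$ for another component, so every out-neighbour of $x$ lies in $T$ and in particular $xy\notin A(D)$; likewise, since $S$ is a source, every in-neighbour of $y$ lies in $S$. Hence $d^+(x)\le|T|-1$ and $d^-(y)\le|S|-1$, and as $S$ and $T$ are disjoint we get $d^+(x)+d^-(y)\le|S|+|T|-2\le n-2$, contradicting $d^+(x)+d^-(y)\ge n$. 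Thus $D$ is strong.

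\textbf{Step 2 (longest cycle and the insertion obstruction).} Assume for contradiction that a longest oriented cycle $C=c_1c_2\cdots c_mc_1$ has length $m<n$, and set $H=V(D)\setminus V(C)\ne\varnothing$. For $v\in H$ write $A^-(v)=\{i:c_iv\in A(D)\}$ and $A^+(v)=\{j:vc_j\in A(D)\}$. If some index satisfied $i\in A^-(v)$ and $i+1\in A^+(v)$ (indices mod $m$), then $c_1\cdots c_iv c_{i+1}\cdots c_mc_1$ would be a cycle of length $m+1$, contradicting maximality. Hence $A^+(v)$ is disjoint from the cyclic shift $\{i+1:i\in A^-(v)\}$, and since both sit inside $\mathbb{Z}_m$ we obtain $|A^+(v)|+|A^-(v)|\le m$; that is, the interaction of each off-cycle vertex with $C$ is capped by $m$. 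The same bookkeeping applied to a maximal path of $H$-vertices that leaves $C$ at $c_a$ and re-enters at $c_b$ shows that rerouting $C$ through that path lengthens it unless the arc-distance from $c_a$ to $c_b$ along $C$ is correspondingly large, which pins down where off-cycle vertices may attach.

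\textbf{Step 3 (forcing the contradiction).} Because $D$ is strong, some $v\in H$ has an out-neighbour $c_b$ on $C$; tracing arcs into $v$ and along a maximal $H$-path back to $C$, the plan is to isolate a non-adjacent ordered pair $(x,y)$ — drawn from $v$, the cycle vertex $c_{b-1}$ just before the exit, and the entry end of the $H$-path — to which the hypothesis $d^+(x)+d^-(y)\ge n$ applies. One then adds the degree bounds of Step 2, which restrict the arcs between these vertices and $C$ to at most $m$, to the arcs these vertices may send among themselves, and argues that the total mandated by $d^+(x)+d^-(y)\ge n>m$ over-saturates the $m$ available gaps of $C$, forcing either a direct insertion or a longer rerouting and contradicting the maximality of $C$. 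I expect this accounting to be the main obstacle: the single-vertex insertion principle must be upgraded to multi-vertex detours, and one must simultaneously control the arcs between $\{x,y\}$ and $C$ and the arcs inside $H$, so that no admissible arc configuration compatible with $d^+(x)+d^-(y)\ge n$ avoids creating a cycle longer than $C$. Making this simultaneous count tight enough to hit the bound $n$ is where the real work lies.
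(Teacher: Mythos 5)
Your Step 1 (strong connectivity) is correct, and so is the single-vertex insertion bound $|A^+(v)|+|A^-(v)|\le m$ at the start of Step 2; these are the standard opening moves. But Step 3 is not a proof --- it is an announcement of a plan. You write that ``the plan is to isolate a non-adjacent ordered pair,'' that you ``expect this accounting to be the main obstacle,'' and that making the count tight ``is where the real work lies'' --- and indeed it is: the entire content of Woodall's theorem is concentrated in precisely the step you leave open. (Note that the paper you are working against does not prove this statement either; it cites it, so your attempt must stand on its own.) Concretely, what is missing is a proved multi-vertex analogue of the insertion bound: for a longest cycle $C$ of length $m$ and a path $P=(u,\ldots,v)$ of order $p$ in $D\setminus V(C)$, maximality of $C$ forces an estimate of the shape $d^+_{V(C)}(v)+d^-_{V(C)}(u)\le m-p+1$, because each ``gap'' $c_i,c_{i+1}$ of $C$ can host at most one of an arc $c_iu$ and an arc $vc_{i+1}$, and gaps too close behind an entry point are forbidden outright. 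This is exactly the kind of statement the present paper proves in the bipartite setting as Lemma~\ref{lem:4}, and its proof there is not bookkeeping that follows automatically from the single-vertex case; one must also control arcs inside $V(D)\setminus V(C)$ and arcs between the path ends and the off-cycle vertices not on $P$, then choose the right non-adjacent pairs and sum several hypothesis inequalities to reach a contradiction. Until that count is actually carried out, you have a correct first lemma and a gesture at a second, not a proof.

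If you want a complete argument without reconstructing that count, there is a legitimate shortcut using a result the paper already states: Meyniel's theorem (Theorem~\ref{thm:Mey}). Your Step 1 shows $D$ is strong, hence any two vertices are joined by oriented paths in both directions. Moreover, if $x$ and $y$ satisfy both $xy\notin A(D)$ and $yx\notin A(D)$, then applying Woodall's hypothesis twice gives $d(x)+d(y)=\bigl(d^+(x)+d^-(y)\bigr)+\bigl(d^+(y)+d^-(x)\bigr)\ge 2n\ge 2n-1$, so Meyniel's condition holds and $D$ has a hamiltonian cycle. This reduction is two lines given your Step 1; the price is that it rests on Meyniel's theorem, which is a strictly stronger result, so it is a citation-based proof rather than a self-contained one.
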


In terms of the total degrees, we have the following result of Meyniel (see \cite{BonT} for a short proof). Here $d(x)=d^+(x)+d^-(x)$.

\begin{theorem}[{Meyniel, \cite{Mey}}]
\label{thm:Mey}
Let $D$ be a digraph on $n$ vertices ($n\geq3$) in which, for any two distinct vertices $x$ and $y$, there is an oriented path from $x$ to $y$ and from $y$ to $x$. If $d(x)+d(y) \geq 2n-1$ for any two vertices $x$ and $y$ such that $xy\notin A(D)$ and $yx\notin A(D)$, then $D$ contains an oriented cycle of length $n$.
\end{theorem}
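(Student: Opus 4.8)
The plan is to argue by contradiction using the extremal method on a longest cycle, in the spirit of the Bondy--Thomassen argument cited as \cite{BonT}. Suppose $D$ satisfies the hypotheses but contains no oriented cycle of length $n$. The strong-connectivity hypothesis (an oriented path in both directions between every ordered pair of distinct vertices) guarantees that $D$ contains at least one cycle, so I would fix a longest oriented cycle $C$ and write $c$ for its length; by assumption $c<n$, so the set $R=V(D)\setminus V(C)$ is nonempty.

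The first substantive step is to extract, from strong connectivity, an oriented path that leaves $C$, passes through vertices of $R$, and returns to $C$: concretely a path $P=z_0z_1\cdots z_{r+1}$ with endpoints $z_0,z_{r+1}\in V(C)$ and interior vertices $z_1,\dots,z_r\in R$. Among all such paths I would choose one that is extremal --- longest, with the secondary aim of being non-insertable into $C$ --- so that the interior of $P$ cannot be spliced into the gaps of $C$ to yield a longer cycle. This is where I would deploy the multiple-insertion technique: partitioning $P$ into consecutive subpaths and asking whether each can be absorbed between two consecutive vertices of $C$ by means of the appropriate forward and backward arcs.

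The heart of the argument is to convert the maximality of $C$ into a large collection of forbidden arcs. If a vertex of $P$ (or one of the endpoints $z_0,z_{r+1}$) could be joined to a pair of consecutive vertices $x_j,x_{j+1}$ of $C$ by the ``right'' arcs, we could reroute $C$ through part of $P$ and produce a cycle longer than $C$, contradicting its choice. Cataloguing these forbidden configurations around each gap of $C$ shows that suitable vertices of $P$ together miss arcs to a controlled number of vertices of $V(C)\cup R$; equivalently, I would single out a pair $x,y$ (one on $C$, one the appropriate end of the handle $P$) that are non-adjacent, i.e.\ $xy\notin A(D)$ and $yx\notin A(D)$, while at the same time bounding $d(x)+d(y)$ from above.

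The final step is the degree count: I expect the forbidden-arc bookkeeping to force $d(x)+d(y)\leq 2n-2$ for this non-adjacent pair, directly contradicting the hypothesis $d(x)+d(y)\geq 2n-1$. The main obstacle --- and the reason this is genuinely harder than Woodall's Theorem~\ref{thm:W} --- is precisely this accounting. Because the condition is imposed on the symmetric total degrees $d(\cdot)=d^+(\cdot)+d^-(\cdot)$ with the sharp constant $2n-1$, I cannot separate the ``in'' and ``out'' contributions as cleanly as in the half-degree setting, and the insertion analysis must be tight enough that the deficiency is exactly one arc. Squeezing the multiple-insertion count to deliver the bound $2n-2$ rather than something weaker, and disposing of the degenerate cases (where $P$ is short, or meets $C$ in consecutive vertices), is the crux; throughout, strong connectivity does double duty, both guaranteeing that $C$ exists and supplying the handle $P$.
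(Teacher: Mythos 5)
First, a point of reference: the paper does not prove Theorem~\ref{thm:Mey} at all --- it quotes Meyniel's theorem as background, citing \cite{Mey} for the original and \cite{BonT} for a short proof --- so there is no in-paper argument to compare against. Your proposal correctly identifies the strategy of that short proof (longest cycle $C$, nonempty remainder $R$, a ``handle'' through $R$ supplied by strong connectivity, maximality of $C$ converted into forbidden arcs, and a final degree count on a non-adjacent pair). But as written it is an outline of intentions, not a proof, and the step you leave open is precisely the entire mathematical content of the theorem. Everything you do carry out --- existence of a cycle, existence of a path leaving and re-entering $C$ through $R$, the observation that insertable configurations contradict maximality --- is routine. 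The theorem lives in the sentence you phrase as ``I expect the forbidden-arc bookkeeping to force $d(x)+d(y)\leq 2n-2$'': you never state, let alone prove, the insertion lemma that makes this bookkeeping work (in \cite{BonT} this is a precise quantitative lemma bounding the number of arcs between a non-insertable vertex and a path), and you never exhibit a concrete pair $x,y$ together with a verified count. Note that establishing \emph{non-adjacency} of the chosen pair is itself part of the work, since the hypothesis constrains only pairs with $xy\notin A(D)$ and $yx\notin A(D)$; moreover your candidate pair is ill-specified --- the ends $z_0,z_{r+1}$ of your handle lie on $C$, so ``one on $C$, one the appropriate end of the handle $P$'' does not even name a vertex of $R$, and the vertices of $R$ adjacent to the ends are the ones that must enter the count.

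There is also a structural reason the gap is not easily filled in the form you describe. Cataloguing forbidden configurations ``around each gap of $C$'' one at a time, as you propose, loses additive constants: a per-gap count of missing arcs typically yields a degree bound of the form $2n$ or worse, which proves nothing against the sharp hypothesis $2n-1$. The argument in \cite{BonT} avoids this by accounting simultaneously for arcs between the off-cycle vertex and $V(C)$, arcs within $D\setminus V(C)$, and the interaction of several non-insertable vertices (chosen, e.g., in a terminal strong component of $D\setminus V(C)$). Since you explicitly defer exactly this accounting as ``the crux,'' the proposal does not establish the theorem; completing it would require reproducing the counting lemma of \cite{BonT} or an equivalent, which is the hard part rather than a loose end.
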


Naturally, for bipartite digraphs one can expect degree bounds of roughly $|D|/2$ rather than $|D|$.

\begin{theorem}[{Amar \& Manoussakis, \cite{AM}}]
\label{thm:AM}
Let $D$ be a bipartite digraph having colour classes $X$ and $Y$ such that $|X|=a\leq b=|Y|$.
If $\dlp(D)\geq (a+2)/2$ and $\dlm(D)\geq(a+2)/2$, then $D$ contains an oriented cycle of length $2a$.
\end{theorem}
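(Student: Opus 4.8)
The plan is to argue by extremality on the length of an oriented cycle. Since $D$ is bipartite, every oriented cycle alternates between $X$ and $Y$, so a cycle of length $2a$ is precisely one through every vertex of the smaller class $X$. I would take a longest oriented cycle $C = x_1 y_1 x_2 y_2 \cdots x_m y_m x_1$ (with $x_i \in X$, $y_i \in Y$, indices read mod $m$) and suppose, for contradiction, that $m < a$. Because $b \geq a > m$, there are vertices of $X$ \emph{and} of $Y$ lying off $C$; fix once and for all a vertex $x_0 \in X \setminus V(C)$.

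The structural engine is the strength of the hypothesis \emph{at the vertices of the small class}: for every $y \in Y$ the sets $N^+(y)$ and $N^-(y)$ are contained in $X$ and each have at least $(a+2)/2$ elements, so any two subsets of $X$ whose sizes sum to more than $a$ must intersect. I would exploit this through a single-arc insertion. Suppose $x_0$ has an out-neighbour $y_0 \in N^+(x_0) \setminus V(C)$, and let $I = \{\, i : y_i x_0 \in A(D) \,\}$ index its in-neighbours lying on $C$. The successors $\{\, x_{i+1} : i \in I \,\}$ are $|I|$ distinct vertices of $X$, while $N^+(y_0) \subseteq X$ has at least $(a+2)/2$ elements; hence, as soon as $|I| + (a+2)/2 > a$, the two sets meet, say $x_{i+1} \in N^+(y_0)$ with $i \in I$. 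Replacing the arc $y_i x_{i+1}$ of $C$ by the path $y_i \to x_0 \to y_0 \to x_{i+1}$ then yields a cycle of length $2m+2$, contradicting maximality. Since reversing all arcs of $D$ merely interchanges $\dlp$ and $\dlm$ and so preserves the hypotheses, the symmetric insertion applies whenever $x_0$ has an off-cycle in-neighbour together with enough out-neighbours on $C$.

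It remains to rule out the configurations in which neither insertion fires. Writing $o_c,o_f$ (resp.\ $i_c,i_f$) for the numbers of out- (resp.\ in-) neighbours of $x_0$ on and off $C$, and using $o_c+o_f,\,i_c+i_f \geq (a+2)/2$ together with $o_c,i_c \leq m < a$, a short split of cases shows the only surviving obstructions are: (a) all neighbours of $x_0$ already lie on $C$ (so both cycle-degrees are large), and (b) both cycle-degrees are small (so $x_0$ has off-cycle neighbours of each type but too few on $C$ for the pigeonhole). I would resolve both by a lateral rotation: choosing an off-cycle vertex $y^* \in Y$ and invoking the intersection property at $y^*$ (again $|N^+(y^*)|,|N^-(y^*)| > a/2$), one finds an index $j$ with $x_j \to y^* \to x_{j+1}$, so $y^*$ may replace $y_j$ on $C$ without changing its length. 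This relocates a neighbour of $x_0$ to the outside and returns the situation to the main insertion case; iterating produces the required cycle of length $2m+2$, and the contradiction forces $m=a$.

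The main obstacle I anticipate is exactly this last step. When $b \gg a$ the missing vertex $x_0$ may have \emph{all} of its neighbours on $C$, so the clean single-arc insertion is unavailable and one cannot lean on an abundance of off-cycle $X$-vertices; everything must instead be extracted from the strong half-degree condition at the many $Y$-vertices. The delicate points are the short-cycle regime $m \leq a/2$, where an off-cycle $y^*$ need not reach the cycle at all and a separate argument is required; ensuring the successive swaps can be chained without collisions; and verifying that each pigeonhole partner retains size at least $(a+2)/2$ after a rotation, so that the boundary count (and the parity of $(a+2)/2$) never fails. Managing this bookkeeping is where the real work of the proof lies.
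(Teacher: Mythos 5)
First, a point of order: the paper does not prove this statement at all --- Theorem~\ref{thm:AM} is quoted from Amar and Manoussakis \cite{AM} as background motivation --- so your attempt can only be judged on its own merits, not compared against a proof in the text.

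Your opening moves are sound: in a bipartite digraph a cycle of length $2a$ is exactly one covering $X$; your insertion step is correct (if $x_0\in X\setminus V(C)$ has an off-cycle out-neighbour $y_0$ and the index set $I$ of its in-neighbours on $C$ satisfies $|I|+|N^+(y_0)|>a$, then some $x_{i+1}\in N^+(y_0)$ with $i\in I$, and $C$ lengthens to $2m+2$); and your case split reducing to (a) all neighbours of $x_0$ on $C$, or (b) both cycle-degrees of $x_0$ at most $(a-2)/2$, checks out. The genuine gap is the ``lateral rotation'' you use to dispose of (a) and (b), which is exactly where the theorem's difficulty lives. To find $j$ with $x_j\to y^*\to x_{j+1}$ you need the shifted pigeonhole $|N^-(y^*)\cap V(C)|+|N^+(y^*)\cap V(C)|>m$; since as many as $a-m$ of the at least $(a+2)/2$ in- or out-neighbours of $y^*$ may lie off $C$, this is guaranteed only when $(a+2)-2(a-m)>m$, i.e.\ $m>a-2$. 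So the rotation is available in general only when $m=a-1$; for $m\le a-2$ it can simply fail, and for $m\le(a-2)/2$ the vertex $y^*$ need not have any neighbour on $C$ at all. (Also, ``the intersection property at $y^*$'' by itself only yields a common vertex of $N^-(y^*)$ and $N^+(y^*)$, i.e.\ a $2$-cycle through $y^*$, not the aligned pair $x_j,x_{j+1}$ you need.) Furthermore, even when a rotation does fire, nothing forces the ejected vertex $y_j$ to be a neighbour of $x_0$, so you may land back in case (a) with a different cycle of the same length; you give no termination or potential argument for the chain of swaps, so the iteration may loop forever. Since you yourself flag these steps as ``where the real work of the proof lies,'' what you have is a plan whose easy half is executed and whose hard half is missing, not a proof of the theorem.
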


In case $a=b$, the above theorem gives a Dirac-type condition for hamiltonicity of a balanced bipartite digraph. In \cite{AM}, one also finds a characterization of all the bipartite digraphs that do not contain an oriented cycle of length $2a$, but satisfy $\dlp(D)\geq(a+1)/2$ and $\dlm(D)\geq(a+1)/2$.

As far as the Ore-type conditions for bipartite digraphs go, relatively little is known. The following result of \cite{MM} was the main motivation for the present work.
A bipartite digraph $D$, with colour classes $X$ and $Y$ such that $|X|=a\leq b=|Y|$, is said to satisfy condition $A_k$ ($k\geq0$) when $d^+(u)+d^-(v)\geq a+k$ for all $u$ and $v$ such that $uv\notin A(D)$.

\begin{theorem}[{Manoussakis \& Milis, \cite{MM}}]
\label{thm:MM}
Let $D$ be a bipartite digraph with colour classes $X$ and $Y$ such that $|X|=a\leq b=|Y|$.
If $D$ satisfies $A_2$, then $D$ contains an oriented cycle of length $2a$.
\end{theorem}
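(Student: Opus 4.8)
The plan is to argue by contradiction using a longest cycle together with an external path, in the spirit of the proofs of Theorems~\ref{thm:NW} and~\ref{thm:AM}. Suppose $D$ contains no cycle of length $2a$. Since any directed cycle in a bipartite digraph automatically alternates between $X$ and $Y$, every cycle has even length $2m$ and uses exactly $m$ vertices from each class, so $m\le a$. First I would verify that $D$ contains a cycle at all and then fix a longest cycle $C=c_0c_1\cdots c_{2m-1}c_0$ with $m<a$. Because $b\ge a>m$, both $X\setminus V(C)$ and $Y\setminus V(C)$ are nonempty; I would also record that condition $A_2$ forces each vertex to have large half-degrees, which I expect to yield strong connectivity of $D$ and, in particular, the existence of arcs between $V(C)$ and the rest of $D$.

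The heart of the argument is an insertion principle: if the off-cycle part attaches to $C$ in a sufficiently \emph{aligned} way, then $C$ can be lengthened, contradicting its maximality. Concretely, let $P=u_0u_1\cdots u_p$ be a longest path in $H:=D-V(C)$ (possibly a single vertex). By maximality of $P$, every in-neighbour of $u_0$ and every out-neighbour of $u_p$ lying in $H$ must already lie on $P$. By maximality of $C$, there is no index $i$ for which $c_i\to u_0$ and $u_p\to c_{i+1}$ simultaneously, since such a pair would let me replace the cycle arc $c_ic_{i+1}$ by the detour $c_i\to u_0\to\cdots\to u_p\to c_{i+1}$, yielding a cycle longer than $C$. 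Writing $S=\{i:c_i\to u_0\}$ and $T=\{i:u_p\to c_{i+1}\}$, this forbidden configuration gives $S\cap T=\varnothing$, bounding $|S|+|T|$ by the number of admissible positions on $C$. To control the contribution of arcs incident with the interior of $P$, I would run the same crossing argument over all attachment points of $P$ to $C$, not merely its endpoints; combined with the on-$H$ restrictions this yields an upper bound on $d^-(u_0)+d^+(u_p)$.

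The final step is to collide this upper bound with the lower bound from $A_2$. Provided $u_pu_0\notin A(D)$, condition $A_2$ gives $d^+(u_p)+d^-(u_0)\ge a+2$; the alternative $u_p\to u_0$ closes $P$ into a cycle inside $H$, a degenerate situation I would merge with $C$ by the same insertion. Choosing the endpoints and attachment positions so that the counting is tight should leave no gap between the two bounds, producing the desired contradiction and pinning down the constant $2$ as sharp.

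I expect the main obstacle to be the parity of $P$, namely whether $u_0$ and $u_p$ lie in the same or in opposite colour classes. When they lie in opposite classes, the arcs $c_i\to u_0$ and $u_p\to c_{i+1}$ can coexist with $c_i,c_{i+1}$ consecutive on $C$, and the counting runs smoothly; when $u_0$ and $u_p$ lie in the same class this is impossible, so the naive insertion fails and I would have to reroute through an auxiliary external vertex of the opposite class (which exists since both $X\setminus V(C)$ and $Y\setminus V(C)$ are nonempty) or replace a two-arc segment of $C$, at the cost of a more delicate case analysis. A secondary difficulty is guaranteeing enough arcs between $C$ and $H$ to seed the counting—this is where I would invoke the strong connectivity extracted from $A_2$—and keeping the estimates sharp enough to reach exactly $a+2$ rather than a weaker bound.
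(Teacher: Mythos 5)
First, note that the paper does not prove Theorem~\ref{thm:MM} at all: it is quoted from Manoussakis and Milis \cite{MM} as motivation, so there is no in-paper proof to compare your attempt against. Judged on its own, your proposal is a plan with several genuine gaps rather than a proof. The most serious one is quantitative: the crossing device you describe (disjointness of $S=\{i:c_i\to u_0\}$ and $T=\{i:u_p\to c_{i+1}\}$) only yields $d^-_{V(C)}(u_0)+d^+_{V(C)}(u_p)\le m$, where $2m$ is the length of $C$, and the off-cycle contributions are bounded only by $|V(P)|$ via maximality of $P$ in $H=D\setminus V(C)$. Adding these gives roughly $d^+(u_p)+d^-(u_0)\le m+p+1$, which can far exceed $a+2$ (take $m$ about $a/2$ and $P$ long), so the collision with condition $A_2$ simply does not occur. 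What is missing is the stronger geometric fact that the two attachment points of $P$ on $C$ must be separated along $C$ by a gap at least as long as $P$ itself (otherwise the insertion already lengthens $C$), which improves the bound to roughly $m-p+1$; this is exactly the content of the paper's Lemma~\ref{lem:4} in the balanced, matching-compatible setting, and without it the counting cannot be made "tight" as you hope.

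Second, the case where $u_0$ and $u_p$ lie in the same colour class --- where single-arc insertion is structurally impossible --- is deferred to "a more delicate case analysis" and never carried out, and the "strong connectivity extracted from $A_2$" is asserted rather than proved. This case cannot be waved away: Remark~\ref{rem:ak} of the paper (with $k=1$) exhibits a disconnected digraph satisfying every \emph{opposite-class} instance of $A_2$ yet containing no $C_{2a}$, so any correct proof of Theorem~\ref{thm:MM} must essentially exploit the same-class instances of $A_2$ (the feature the paper singles out as making $A_2$ so restrictive). Your argument invokes $A_2$ only for the pair $(u_p,u_0)$, and in the opposite-class branch uses no same-class instance at all, so as written it would "prove" a statement that is false. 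A correct route needs either the matching/compatibility machinery of Section~\ref{sec:proof} (adapted to the unbalanced case) or a systematic use of the same-class degree sums to force the structure of $H$ and its attachment to $C$.
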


The problem with the above result is that condition $A_2$ concerns \emph{all} pairs of non-neighbouring vertices of $D$. In particular, it concerns the pairs of vertices from the same colour class, which puts a very restrictive assumption on $D$. To make condition $A_2$ more meaningful, one thus needs to require that only the pairs of vertices from opposite colour classes be considered (as in Definition~\ref{def:Bk} above).

We conjecture the following (and prove it for $a=b$ in the next section).

\begin{conjecture}
\label{conj:main}
Let $D$ be a bipartite digraph with colour classes $X$ and $Y$ such that $|X|=a\leq b=|Y|$. If
\begin{equation}
\label{ab2}
d^+(u)+d^-(v)>\frac{a+b+2}{2}
\end{equation}
whenever $u$ and $v$ lie in opposite colour classes and $uv\notin A(D)$, then $D$ contains an oriented cycle of length $2a$.
\end{conjecture}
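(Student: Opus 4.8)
The balanced case $a=b$ is already Theorem~\ref{thm:main}: for $a=b$ the hypothesis \eqref{ab2} reads $d^+(u)+d^-(v)>a+1$, i.e. $d^+(u)+d^-(v)\geq a+2$, which is precisely condition $A^*_2$ of Definition~\ref{def:Bk}. So the plan is to attack the genuinely unbalanced range $a<b$ through an extremal analysis of a longest cycle. Suppose, for contradiction, that $D$ satisfies \eqref{ab2} but has no oriented cycle of length $2a$, and let $C$ be a longest oriented cycle in $D$ (the degree hypothesis forces $C$ to exist), say of length $2m$ with $m\leq a-1$. Write $C=x_1y_1x_2y_2\cdots x_my_mx_1$ with $x_i\in X$, $y_i\in Y$, and set $H=X\setminus V(C)$, $K=Y\setminus V(C)$, so that $|H|=a-m\geq1$ while $|K|=b-m>|H|$.

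The basic augmenting move is the insertion of an off-cycle pair. Given $x_0\in H$, $y_0\in K$ with $y_0x_0\in A(D)$, the arc $x_iy_i$ of $C$ can be replaced by the path $x_iy_0x_0y_i$ as soon as $x_iy_0\in A(D)$ and $x_0y_i\in A(D)$, producing a cycle of length $2m+2$ and contradicting the maximality of $C$ (the symmetric pattern $y_ix_0y_0x_{i+1}$ covers the case $x_0y_0\in A(D)$; and if neither $x_0y_0$ nor $y_0x_0$ is an arc, then \eqref{ab2} applies to \emph{both} ordered pairs, a strong structural windfall). Counting indices, an insertion of this kind exists whenever $d^+_C(x_0)+d^-_C(y_0)>m$, where $d^+_C(x_0)$ (resp.\ $d^-_C(y_0)$) counts the cycle-vertices of the relevant class among the out-neighbours of $x_0$ (resp.\ in-neighbours of $y_0$). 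Writing $d^+_C(x_0)=d^+(x_0)-d^+_K(x_0)$ and $d^-_C(y_0)=d^-(y_0)-d^-_H(y_0)$ and using the trivial bounds $d^+_K(x_0)\leq b-m$, $d^-_H(y_0)\leq a-m$, this reduces to requiring $d^+(x_0)+d^-(y_0)>a+b-m$.

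The instructive point is that this last requirement matches \eqref{ab2} \emph{exactly} when $a=b$ and $m=a-1$, and degrades as $b-a$ or $a-1-m$ grows: since $m\leq a-1\leq\tfrac{a+b}{2}-1$, for $a<b$ the quantity $a+b-m$ strictly exceeds the bound $\tfrac{a+b+2}{2}$ supplied by \eqref{ab2}. Thus a single insertion is provably insufficient in the unbalanced case: the oversized remainder $H\cup K$ can in principle absorb almost all of the two half-degrees, so that $d^+_K(x_0)$ and $d^-_H(y_0)$ consume the slack. The heart of the proof must therefore show that a \emph{longest} cycle cannot afford this. The plan is to use the maximality of $C$ to bound the off-cycle half-degrees $d^+_K(x_0)$ and $d^-_H(y_0)$ from above—well below the trivial $b-m$ and $a-m$—or, failing that, to convert a large off-cycle degree directly into extra cycle length: if $x_0$ sends many arcs into $K$ and those $K$-vertices send arcs back toward $C$, one splices a short off-cycle path into $C$ (iterated rotation–extension) rather than a single pair. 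Choosing $C$ among all longest cycles together with a longest system of vertex-disjoint paths covering $H$ should make ``wasted'' degree on $H\cup K$ actionable.

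I expect this bookkeeping—showing that the half-degrees of an off-cycle pair cannot all land on the oversized remainder without creating a longer cycle—to be the decisive obstacle, and it is exactly what keeps the statement at the level of a conjecture. A cleaner alternative avenue would reframe the problem on the small side $X$: form the auxiliary digraph on $X$ with an arc $x\Rightarrow x'$ whenever $N^+(x)\cap N^-(x')\neq\varnothing$, so that a cycle of length $2a$ in $D$ corresponds to a Hamiltonian cycle of this auxiliary digraph realised by an injective choice of intermediate $Y$-vertices (a system of distinct representatives, for which the large value of $b$ is an asset rather than a liability). The catch is that $x,x'$ lie in the \emph{same} colour class, so \eqref{ab2} does not bound $d^+(x)+d^-(x')$ directly and must be routed through $Y$ by a double-counting argument; making that indirection yield both the auxiliary arcs and the SDR simultaneously is itself the core difficulty, mirroring the unbalanced degree accounting above.
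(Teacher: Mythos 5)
The statement you were asked to prove is, in the paper itself, only a conjecture: the paper's sole proof content for it is the balanced case $a=b$, which it obtains exactly as you do, by noting that \eqref{ab2} then reads $d^+(u)+d^-(v)>a+1$, i.e.\ (by integrality) $d^+(u)+d^-(v)\geq a+2$, which is condition $A^*_2$, so Theorem~\ref{thm:main} applies. Your proposal therefore matches the paper's approach precisely — balanced case reduced to Theorem~\ref{thm:main}, unbalanced case $a<b$ left open — and your supplementary analysis of why the single pair-insertion argument degrades when $a<b$ (the threshold $a+b-m$ exceeding $\tfrac{a+b+2}{2}$) is sound but, as you yourself say, does not close the conjecture, which remains open in the paper as well.
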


\begin{remark}
\label{rem:sharp}
We suspect that condition \eqref{ab2} is sharp, but we do not know how to generalize the following example of \cite{AM} (Fig.~1) for arbitrarily large $a$. Here $a=b=3$, and all the vertices have both positive and negative half-degree equal to $2$. Therefore, the sum of half-degrees of any pair of vertices is $4$; i.e., equals to $(a+b+2)/2$. However, no oriented cycle of length $6$ is contained in this digraph.
\end{remark}

\begin{center}
\setlength{\unitlength}{.8cm}
\begin{picture}(7,3.5)
\put(1,1){\circle{0.2}}
\put(3,1){\circle{0.2}}
\put(5,1){\circle{0.2}}
\put(1,3){\circle*{0.2}}
\put(3,3){\circle*{0.2}}
\put(5,3){\circle*{0.2}}
\put(1,3){\vector(0,-1){1.9}}
\put(1,3){\vector(2,-1){3.9}}
\put(3,3){\vector(0,-1){1.9}}
\put(3,3){\vector(1,-1){1.9}}
\put(5,3){\vector(-2,-1){3.9}}
\put(5,3){\vector(-1,-1){1.9}}
\put(1.1,1.1){\vector(1,1){1.8}}
\put(3,2){\vector(2,1){1.9}}
\put(2.9,1.1){\vector(-1,1){1.8}}
\put(4.5,2.5){\vector(1,1){0.4}}
\put(3,2){\vector(-2,1){1.9}}
\put(3.5,2.5){\vector(-1,1){0.4}}
\put(3,0){\makebox(0,0)[b]{\ Fig.~1}}
\end{picture}
\end{center}

\begin{remark}
\label{rem:ak}
Note also that the bound $(a+b+2)/2$ in \eqref{ab2} cannot be replaced, in general, by a bound of the type $a+k$, for any $k\in\N$.
Indeed, for $k\in\N$ and any $b\geq a+2k+2$, let $D$ be the disjoint union of digraphs  $K^*_{1,k+2}$ and $K^*_{a-1,b-k-2}$ (Fig.~2), where $K^*_{k,l}$ denotes the complete bipartite digraph with colour classes of cardinalities $k$ and $l$. Clearly $D$ does not contain an oriented cycle of length $2a$, but the sum of half-degrees of non-neighbouring vertices from opposite colour classes is either $(a-1)+(k+2)=a+k+1$ or $1+(b-k-2)=b-k-1$, so in any case it is greater than or equal to $a+k+1$.
\end{remark}

\begin{center}
\setlength{\unitlength}{.8cm}
\begin{picture}(8,5.2)
\put(2,1.5){\oval(2,1)}
\put(1.5,1){\makebox(1,1){$k+2$}}
\put(5.5,1.5){\oval(3,1)}
\put(4.5,1){\makebox(2,1){$b-k-2$}}
\put(2,4.1){\circle*{0.2}}
\put(5.5,4){\oval(2.4,1)}
\put(4.5,3.5){\makebox(2,1){$a-1$}}
\put(1.9,4){\vector(-1,-3){.65}}
\put(1.7,3.4){\vector(1,3){.2}}
\put(2,4){\vector(0,-1){1.9}}
\put(2,3.5){\vector(0,1){.5}}
\put(2.1,4){\vector(1,-3){.65}}
\put(2.3,3.4){\vector(-1,3){.2}}
\put(4.7,3.5){\vector(0,-1){1.5}}
\put(4.7,3){\vector(0,1){.5}}
\put(6.3,3.5){\vector(0,-1){1.5}}
\put(6.3,3){\vector(0,1){.5}}
\put(4.9,2.2){\vector(1,1){1.2}}
\put(5.1,2.4){\vector(-1,-1){.2}}
\put(6.1,2.2){\vector(-1,1){1.2}}
\put(5.9,2.4){\vector(1,-1){.2}}
\put(3.5,0){\makebox(0,0)[b]{\ Fig.~2}}
\end{picture}
\end{center}

\subsection{Notation and terminology}
\label{subsec:not}

This paper is concerned with digraphs, in the sense of \cite{BT}. That is, the set $A(D)$ of arcs of $D$ consists only of ordered pairs of vertices of $D$ (i.e., $D$ has no loops or multiple arcs). Given a digraph $D$, we denote by $V(D)$ the set of its vertices, and the number of vertices $|V(D)|$ is the \emph{order} of $D$.
We write $xy\in A(D)$ to say that an arc from a vertex $x$ to a vertex $y$ is contained in $D$. If $xy\in A(D)$, then $x$ and $y$ are called \emph{neighbours}. For a set $S\subset V(D)$, we denote by $N^+(S)$ the set of \emph{vertices dominated} by the vertices of $S$; i.e.,
\[
N^+(S)=\{v\in V(D): uv\in A(D)\text{\ for\ some\ }u\in S\}\,.
\]
Similarly, $N^-(S)$ denotes the set of \emph{vertices dominating} the vertices of $S$; i.e,
\[
N^-(S)=\{v\in V(D): vu\in A(D)\text{\ for\ some\ }u\in S\}\,.
\]
For $S=\{u\}$, we set $d^+(u)=|N^+(u)|$ and $d^-(u)=|N^-(u)|$, which we call the \emph{positive} and \emph{negative half-degree} of $u$, respectively\footnote{Also known in literature as the \emph{outdegree} and \emph{indegree}.}. Further, $\dlp(D)$ and $\dlm(D)$ denote respectively the least positive and the least negative half-degrees of $D$. A digraph obtained from $D$ by removing the vertices of $S$ and their incident arcs is denoted by $D\setminus V(S)$.

For $u\in V(D)$ and $S\subset V(D)$, we set $N^+_S(u)$ (resp. $N^-_S(u)$) to be the set of vertices of $S$ dominated by (resp. dominating) $u$, and denote its cardinality by $d^+_S(u)$ (resp. $d^-_S(u)$).

An oriented cycle (resp. oriented path) on $m$ vertices in $D$ is denoted by $C_m$ (resp. $P_m$). If the vertices are $v_1,\dots,v_m$, we write $C_m=[v_1,\ldots,v_m]$ and $P_m=(v_1,\ldots,v_m)$. We will refer to them as simply \emph{cycles} and \emph{paths} (skipping the term ``oriented''), since their non-oriented counterparts are not considered in this note at all.

Let $D$ be a bipartite digraph, with colour classes $X$ and $Y$. We say that $D$ is \emph{balanced} if $|X|=|Y|$. A \emph{matching from $X$ to $Y$} is an independent set of arcs with origin in $X$ and terminus in $Y$. If $G$ is balanced, one says that such a matching is \emph{complete} if it consists of precisely $|X|$ arcs. A path or cycle is said to be \emph{compatible} with a matching $M$ from $X$ to $Y$ if its arcs are alternately in $M$ and in $A(D)\setminus M$.

\medskip

\section{Proof of the main result}
\label{sec:proof}

In this section, we prove Theorem~\ref{thm:main}. For the rest of the paper, $D$ denotes a balanced bipartite digraph with colour classes $X$ and $Y$, where $|X|=|Y|=a$ (hence $|V(D)|=2a$). Recall condition $A^*_k$ of Definition~\ref{def:Bk}.

\subsection{Lemmas}
\label{subsec:lemmas}

The proof of Theorem~\ref{thm:main} is based on the following four simple lemmas and a remark.

\begin{lemma}
\label{lem:1}
If $D$ satisfies condition $A^*_0$, then $D$ contains a complete matching from $X$ to $Y$.
\end{lemma}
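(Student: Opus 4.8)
The plan is to reduce the existence of a complete matching from $X$ to $Y$ to a purely graph-theoretic statement and then verify the hypothesis of Hall's marriage theorem. Form the underlying (undirected) bipartite graph $G$ on vertex classes $X$ and $Y$ in which $x\in X$ is joined to $y\in Y$ precisely when $xy\in A(D)$. A complete matching from $X$ to $Y$ in $D$ is then exactly a matching of $G$ saturating $X$, and by Hall's theorem such a matching exists if and only if $|N^+(S)|\geq|S|$ for every $S\subseteq X$ (note that the neighbourhood of $S$ in $G$ is precisely $N^+(S)$). So it suffices to verify this Hall-type inequality for all $S\subseteq X$.

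The core of the argument is to show that a violation of Hall's condition is incompatible with $A^*_0$. Suppose for contradiction that some nonempty $S\subseteq X$ satisfies $|N^+(S)|<|S|$. Since $|S|\leq a=|Y|$, we have $|N^+(S)|<a$, so the set $Y\setminus N^+(S)$ is nonempty; fix any $u\in S$ and any $v\in Y\setminus N^+(S)$. By the choice of $v$, no vertex of $S$ dominates $v$, and in particular $uv\notin A(D)$, so condition $A^*_0$ gives $d^+(u)+d^-(v)\geq a$. The point is now to bound the two terms in complementary fashion: all out-neighbours of $u$ lie in $N^+(S)$, whence $d^+(u)\leq|N^+(S)|$, while all in-neighbours of $v$ lie in $X\setminus S$ (as $v\notin N^+(S)$), whence $d^-(v)\leq|X\setminus S|=a-|S|$. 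Combining these estimates yields $a\leq|N^+(S)|+(a-|S|)$, i.e.\ $|S|\leq|N^+(S)|$, contradicting the assumed deficiency.

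The substance of the proof is really this single complementary counting observation, together with the correct choice of the pair $(u,v)$: $u$ taken \emph{inside} the deficient set $S$ so that its out-neighbourhood is confined to $N^+(S)$, and $v$ taken \emph{outside} $N^+(S)$ so that its in-neighbourhood avoids $S$ entirely. Once this is arranged, $A^*_0$ forces the two deficiencies to cancel. I do not anticipate a serious obstacle here; the only small points requiring care are checking that $S$ may be assumed nonempty and that $Y\setminus N^+(S)$ is indeed nonempty, both of which follow immediately from the strict inequality $|N^+(S)|<|S|\leq a$.
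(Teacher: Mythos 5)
Your proof is correct and follows essentially the same route as the paper's: both reduce to the K\H{o}nig--Hall condition $|N^+(S)|\geq|S|$ and verify it by picking $u\in S$, $v\in Y\setminus N^+(S)$ and combining the bounds $d^+(u)\leq|N^+(S)|$, $d^-(v)\leq a-|S|$ with condition $A^*_0$. The only difference is cosmetic: you argue by contradiction where the paper derives the inequality directly.
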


\begin{proof}
By the K{\"o}nig-Hall theorem (see, e.g., \cite{B}), it suffices to show that $|N^+(S)|\geq|S|$ for every set $S\subset X$. If $N^+(S)=Y$, then there is nothing to show. Otherwise, we can choose vertices $x\in S$ and $y\in Y\setminus N^+(S)$. Now $xy\notin A(D)$; therefore, by assumption,
\[
a\leq d^+(x)+d^-(y)\leq|N^+(S)|+|X\setminus S|=|N^+(S)|+a-|S|\,.
\]
Hence $|N^+(S)|\geq|S|$, as required.
\end{proof}

\begin{remark}
\label{rem:1}
Suppose $D$ contains a complete matching $M$ from $X$ to $Y$, and let $(p_1,\ldots,p_s)$ be a path in $D$ compatible with $M$, and of maximal length among paths compatible with $M$. (We will say \emph{``maximal path compatible with $M$''} for short.) Denote this path by $P$.
It follows from maximality of $P$ that $p_1\in X$ and $p_s\in Y$. Hence, in particular, $s$ is even.

Indeed, if $p_1\in Y$, then $p_1$ is dominated by a vertex $x\in X\setminus V(P)$ such that $xp_1\in M$ (by completeness of $M$). If $x=p_s$, then $P$ is, in fact, a cycle and we can renumber its vertices so that $p_1\in X$ (and hence $p_s\in Y$). Otherwise, $(x,p_1,\ldots,p_s)$ is a path compatible with $M$ of length greater than $P$; a contradiction. Similarly, if $p_s\in X$ (and $p_sp_1\notin M$) then there exists $y\in Y\setminus V(P)$ such that $p_sy\in M$, again contradicting the maximality of $P$.
\end{remark}

\begin{lemma}
\label{lem:2}
Assume that $D$ satisfies condition $A^*_1$, and the order of $D$ is at least $4$ (i.e., $a\geq2$).
Choose $M$ a complete matching from $X$ to $Y$ and $P$ a maximal path compatible with $M$. Write $P=(p_1,\ldots,p_s)$.
If $p_sp_1\in A(D)$, then $D$ contains an oriented cycle $C_{2a}$ compatible with $M$.
\end{lemma}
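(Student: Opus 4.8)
The plan is to argue by contradiction on the length $s$ of $P$. First I would record that $C:=[p_1,\ldots,p_s]$ is a cycle compatible with $M$: since $p_1\in X$ and $p_s\in Y$ by Remark~\ref{rem:1}, the closing arc $p_sp_1$ runs from $Y$ to $X$ and therefore lies in $A(D)\setminus M$, so adjoining it to $P$ preserves the alternation. Thus the $X$-vertices of $C$ are $p_1,p_3,\ldots,p_{s-1}$, its $Y$-vertices are $p_2,p_4,\ldots,p_s$, and the arcs of $M$ used by $C$ are exactly $p_1p_2,p_3p_4,\ldots,p_{s-1}p_s$; in particular $|V(C)\cap X|=|V(C)\cap Y|=s/2$. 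If $s=2a$ then $C$ is the desired $C_{2a}$, so I assume for contradiction that $s<2a$, i.e. $s/2\leq a-1$.

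The key observation is that maximality of $P$ bounds the negative half-degree of its initial vertex. Indeed, any arc entering $p_1$ comes from $Y$ and hence lies outside $M$; were its tail $w$ a vertex of $Y\setminus V(P)$, then $(w,p_1,\ldots,p_s)$ would be a path compatible with $M$ and longer than $P$, a contradiction. Hence $N^-(p_1)\subseteq V(C)\cap Y$, so $d^-(p_1)\leq s/2$. Now, since $s/2<a=|Y|$, there is a vertex $y_0\in Y\setminus V(C)$; because $N^-(p_1)\subseteq V(C)$ we have $y_0p_1\notin A(D)$, and $y_0,p_1$ lie in opposite colour classes. Applying condition $A^*_1$ to the pair $(y_0,p_1)$ gives $d^+(y_0)+d^-(p_1)\geq a+1$, whence $d^+(y_0)\geq a+1-s/2$.

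Finally I would use this surplus to produce a longer compatible path. Since $N^+(y_0)\subseteq X$ and $|X\setminus V(C)|=a-s/2$, at most $a-s/2$ out-neighbours of $y_0$ can avoid $C$, so $|N^+(y_0)\cap V(C)|\geq (a+1-s/2)-(a-s/2)=1$. Thus $y_0$ dominates some $X$-vertex $p_{2j+1}$ of $C$ (and $2j+1\neq 1$, as $y_0p_1\notin A(D)$). Following $C$ once from $p_{2j+1}$ then yields $(y_0,p_{2j+1},p_{2j+2},\ldots,p_s,p_1,\ldots,p_{2j})$; its first arc $y_0p_{2j+1}$ lies outside $M$ while $p_{2j+1}p_{2j+2}\in M$, so the alternation of $C$ makes this a path compatible with $M$, and it has $s+1$ vertices. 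This contradicts the maximality of $P$, so $s=2a$ and $C$ is a cycle $C_{2a}$ compatible with $M$.

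I expect the main obstacle to be the compatibility bookkeeping rather than any genuine difficulty: one must track carefully which positions are $X$- versus $Y$-vertices and which arcs belong to $M$, both in order to extract the bound $d^-(p_1)\leq s/2$ from maximality and to certify that the spliced object is a genuinely longer $M$-compatible path. Note also that only a single connection from the outside vertex $y_0$ back into $C$ is needed, which is why a bare degree surplus of one suffices.
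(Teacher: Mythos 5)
Your proof is correct and follows essentially the same route as the paper: both arguments rest on the observation that a vertex of $Y\setminus V(P)$ dominating any vertex of the closed-up cycle would yield a longer $M$-compatible path, combined with condition $A^*_1$ applied to such an outside vertex and a non-dominated $X$-vertex of $P$. The only difference is organizational — the paper turns the "no incoming arcs from outside" fact into degree bounds and derives the numerical contradiction $a+1\leq a$, whereas you run the same inequality forward to force an arc into $C$ and take the resulting longer path as the contradiction.
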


\begin{proof}
We will show that $s=2a$. For a proof by contradiction, suppose otherwise, so $Y\setminus V(P)\neq\varnothing$.

If $yp_i\in A(D)$ for some $y\in Y\setminus V(P)$ and $p_i\in V(P)$, then
\[
(y,p_i,p_{i+1},\ldots,p_s,p_1,\ldots,p_{i-1})
\]
is a path compatible with $M$ and longer than $P$; a contradiction.
We can thus assume that no vertex of $P$ is dominated by a vertex from $Y\setminus V(P)$. Hence $d^-(p_i)\leq |V(P)|/2=s/2$ for all $p_i\in V(P)$, and $d^+(y)\leq|X\setminus V(P)|=a-s/2$ for all $y\in Y\setminus V(P)$. Therefore, for any $p_i\in X\cap V(P)$ and $y\in Y\setminus V(P)$, we have
\[
a+1\leq d^+(y)+d^-(p_i)\leq (a-s/2)+s/2=a\,.
\]
The contradiction proves that $Y\setminus V(P)=\varnothing$, and hence $s=2a$.
\end{proof}

\begin{lemma}
\label{lem:3}
Assume that $D$ satisfies condition $A^*_k$, where $k\geq1$, and the order of $D$ is at least $4$ (i.e., $a\geq2$). If $M$ is a complete matching from $X$ to $Y$,
then there esists $l$, $l\geq a+k$, such that $D$ contains an oriented cycle $C_l$ compatible with $M$.
\end{lemma}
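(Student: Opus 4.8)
The plan is to take a maximal path $P=(p_1,\ldots,p_s)$ compatible with $M$, which by Remark~\ref{rem:1} satisfies $p_1\in X$, $p_s\in Y$, and has even length $s$. The goal is to produce a cycle compatible with $M$ of length at least $a+k$. The natural strategy is to show that $P$ can be ``closed up'' into a long cycle by finding an arc back to an early vertex. If $p_sp_1\in A(D)$, then Lemma~\ref{lem:2} already gives a cycle of length $2a\geq a+k$ and we are done, so we may assume $p_sp_1\notin A(D)$.

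First I would exploit the maximality of $P$ to constrain the arcs incident to its endpoints. As in the proof of Lemma~\ref{lem:2}, no vertex of $Y\setminus V(P)$ dominates any vertex of $P$, and dually no vertex of $p_s$ can dominate a vertex of $X\setminus V(P)$; otherwise $P$ could be extended. This means $N^+(p_s)\subseteq V(P)$ and, together with $p_sp_1\notin A(D)$, that the out-neighbours of $p_s$ all lie among $p_2,\ldots,p_{s-1}$. Now I would apply the standard rotation/crossing argument: if $p_sp_{i+1}\in A(D)$ for some even-indexed $p_i$ (so that $p_ip_{i+1}\in M$), then we can try to close a cycle using the segment of $P$ from $p_{i+1}$ to $p_s$. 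The key point is to relate the out-neighbours of $p_s$ on $P$ to the in-neighbours of $p_1$ on $P$ via condition $A^*_k$ applied to the non-adjacent pair $(p_s,p_1)$, giving $d^+(p_s)+d^-(p_1)\geq a+k$.

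The main step is then a counting argument. Since $p_sp_1\notin A(D)$, condition $A^*_k$ gives $d^+(p_s)+d^-(p_1)\geq a+k$. Because all these neighbours lie on $P$ (out-neighbours of $p_s$ in $X\cap V(P)$, in-neighbours of $p_1$ in $Y\cap V(P)$), I would index the $M$-arcs along $P$ as the pairs $(p_{2j-1},p_{2j})$ for $j=1,\ldots,s/2$ and count how many indices $j$ are ``good'' in the sense that $p_sp_{2j-1}$ or $p_{2j}p_1$ gives a usable chord. The plan is to show that if the half-degree sum is at least $a+k$, there must exist an index where the chords from $p_s$ and to $p_1$ can be combined to form a cycle of length at least $a+k$ compatible with $M$: roughly, a chord $p_sp_{2j+1}\in A(D)$ together with a chord $p_{2i}p_1\in A(D)$ for suitable $i\leq j$ yields the cycle $[p_1,\ldots,p_{2i},\ldots]$ closed appropriately, and the number of vertices captured is controlled by the degree sum.

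The hard part will be the bookkeeping that turns the lower bound $a+k$ on $d^+(p_s)+d^-(p_1)$ into a guarantee that the resulting cycle has length at least $a+k$ rather than merely \emph{some} cycle. The delicate issue is that a chord from $p_s$ and a chord into $p_1$ might ``cross'' in a way that either skips vertices (shortening the cycle) or fails to be compatible with $M$; one must choose the chords so that the closed-up cycle uses a long contiguous $M$-alternating segment of $P$. I expect the cleanest way is to identify, among the at least $a+k$ relevant endpoints distributed over the $s/2\le a$ matching-arcs of $P$, a pair realizing the longest closable segment, and to verify by a pigeonhole/extremal-index argument that its length is at least $a+k$. Handling the boundary cases (chords landing at the very ends of $P$, or all out-neighbours of $p_s$ clustering near $p_s$) will require the most care.
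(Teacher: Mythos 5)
Your setup coincides with the paper's: take a maximal path $P=(p_1,\ldots,p_s)$ compatible with $M$, dispose of the case $p_sp_1\in A(D)$ via Lemma~\ref{lem:2}, use maximality of $P$ to conclude that all out-neighbours of $p_s$ and all in-neighbours of $p_1$ lie on $V(P)$, and apply $A^*_k$ to the non-adjacent pair $(p_s,p_1)$ to get $d^+_{V(P)}(p_s)+d^-_{V(P)}(p_1)\geq a+k$. Up to that point you are exactly on track. But the step you defer as ``the hard part'' --- turning this degree sum into a single cycle of length at least $a+k$ --- is precisely where your plan breaks, because the route you sketch (combining a chord $p_sp_{2j+1}$ with a chord $p_{2i}p_1$ into one cycle) is not available in a digraph. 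A chord into $p_1$ closes only a \emph{prefix} cycle $[p_1,\ldots,p_{2i}]$ and a chord out of $p_s$ closes only a \emph{suffix} cycle $[p_{i_0},\ldots,p_s]$; to use both chords in one simple cycle you would have to traverse a segment of $P$ against its orientation (as in the classical undirected Ore rotation), or reuse vertices. So no amount of bookkeeping will make the two chords cooperate, and the ``crossing'' difficulty you flag is not a delicate boundary case but a structural obstruction.

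The missing idea is much simpler and requires no combination at all: since $d^+_{V(P)}(p_s)+d^-_{V(P)}(p_1)\geq a+k$, at least one of the two summands is $\geq(a+k)/2$, and a \emph{single} extremal chord from that endpoint already suffices. If $d^+_{V(P)}(p_s)\geq(a+k)/2$, let $i_0=\min\{i:p_sp_i\in A(D)\}$; all $d^+_{V(P)}(p_s)$ out-neighbours of $p_s$ are $X$-vertices of $P$ with index in $[i_0,s-1]$, so the suffix cycle $[p_{i_0},\ldots,p_s]$ has length $s-i_0+1\geq 2d^+_{V(P)}(p_s)\geq a+k$, and it is compatible with $M$ since the closing arc $p_sp_{i_0}$ goes from $Y$ to $X$ and hence is not in $M$. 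Symmetrically, if $d^-_{V(P)}(p_1)\geq(a+k)/2$, take $j_0=\max\{j:p_jp_1\in A(D)\}$ and the prefix cycle $[p_1,\ldots,p_{j_0}]$. Note that the lemma only asks for \emph{some} cycle of length $\geq a+k$, not one through all of $V(P)$, which is why this averaging argument closes the proof; as written, your proposal leaves the decisive step unproved and points it in an unworkable direction.
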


\begin{proof}
Let $P$ be a maximal path compatible with $M$. Write $P=(p_1,\ldots,p_s)$.
If $p_sp_1\in A(D)$, then, by Lemma~\ref{lem:2}, $D$ contains a cycle $C_{2a}$ compatible with $M$. Suppose then that $p_sp_1\notin A(D)$.
Recall that $p_1\in X$ and $p_s\in Y$ (Remark~\ref{rem:1}). By maximality of $P$, vertex $p_1$ is not dominated by any $y\in Y\setminus V(P)$, and vertex $p_s$ does not dominate any $x\in X\setminus V(P)$. Therefore, by assumption,
\[
a+k\leq d^+(p_s)+d^-(p_1)=d^+_{V(P)}(p_s)+d^-_{V(P)}(p_1)\,,
\]
and hence $d^+_{V(P)}(p_s)\geq(a+k)/2$ or else $d^-_{V(P)}(p_1)\geq(a+k)/2$.

In the first case, let $i_0=\min\{i\colon p_sp_i\in A(D)\}$. Then $[p_{i_0},p_{i_0+1},\ldots,p_s]$ is a cycle in $D$ compatible with $M$ and of length at least $2d^+_{V(P)}(p_s)$, which is greater than or equal to $a+k$. In the second case, let $j_0=\max\{j\colon p_jp_1\in A(D)\}$. Then $[p_1,p_2,\ldots,p_{j_0}]$ is a required cycle of length at least $2d^-_{V(P)}(p_1)$, which is greater than or equal to $a+k$.
\end{proof}

\begin{lemma}
\label{lem:4}
Let $M$ be a complete matching from $X$ to $Y$ in $D$. Let $C$ be a maximal cycle in $D$ compatible with $M$, and let $(u_1,v_1,\ldots,u_p,v_p)$ be a path in $D\setminus V(C)$, denoted by $P$, compatible with $M$, where $u_i\in X$ and $v_i\in Y$.
If $d^-_{V(C)}(u_1)>0$ and $d^+_{V(C)}(v_p)>0$, then $d^+_{V(C)}(v_p)+d^-_{V(C)}(u_1)\leq m-p+1$, where $m$ is half the length of $C$.
\end{lemma}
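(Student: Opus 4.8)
The plan is to use the maximality of $C$ to constrain the cyclic positions of the in-neighbours of $u_1$ and the out-neighbours of $v_p$ on $C$, and then to convert this into the desired inequality by a counting argument. Write $C=[x_1,y_1,\ldots,x_m,y_m]$ in cyclic order with $x_i\in X$, $y_i\in Y$, so that the $M$-arcs of $C$ are the $x_iy_i$ and the non-$M$ arcs are the $y_ix_{i+1}$ (indices modulo $m$). Since $u_1\in X$ and $v_p\in Y$, bipartiteness forces every in-neighbour of $u_1$ on $C$ to be some $y_j$ and every out-neighbour of $v_p$ on $C$ to be some $x_k$. Thus, setting
\[
B=\{\,j: y_ju_1\in A(D)\,\}\qquad\text{and}\qquad A=\{\,k: v_px_k\in A(D)\,\},
\]
we have $|B|=d^-_{V(C)}(u_1)$ and $|A|=d^+_{V(C)}(v_p)$, both nonempty by hypothesis, and the goal becomes $|A|+|B|\le m-p+1$.

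Next I would exhibit, for each $j\in B$ and $k\in A$, a cycle compatible with $M$ obtained by rerouting $C$ through $P$: follow $P$ from $u_1$ to $v_p$, use the arc $v_px_k$, traverse $C$ forward from $x_k$ until reaching $y_j$, and close up with $y_ju_1$,
\[
u_1\to v_1\to\cdots\to v_p\to x_k\to y_k\to x_{k+1}\to\cdots\to y_j\to u_1.
\]
Since $P$ is disjoint from $C$ and the retained arc of $C$ uses distinct vertices, this is a genuine cycle; checking arc types shows it alternates between $M$ and $A(D)\setminus M$, hence is compatible with $M$. Its length is $2p+2\bigl((j-k)\bmod m+1\bigr)$. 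As $C$ is a longest cycle compatible with $M$, this cannot exceed $2m$, which yields $(j-k)\bmod m\le m-p-1$ for every $j\in B$ and $k\in A$.

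I would then read off the combinatorial content of this constraint: for each $j\in B$, none of the $p$ cyclic positions $j+1,\ldots,j+p$ lies in $A$; equivalently, $A$ is disjoint from $U=\bigcup_{j\in B}\{j+1,\ldots,j+p\}$, so $|A|\le m-|U|$, and it suffices to prove $|U|\ge|B|+p-1$. Here I would exploit $A\neq\varnothing$: fixing any $k_0\in A$, the constraint places all of $B$ inside the arc $\{k_0,k_0+1,\ldots,k_0+m-p-1\}$ of length $m-p$. Writing the cyclic gaps of $B$ as $g_1,\ldots,g_{|B|}$ (so $\sum_t g_t=m$), the union $U$ has size $\sum_t\min(g_t,p)$; confinement of $B$ to an arc of length $m-p$ forces the wrap-around gap to be at least $p+1$, so one summand equals $p$ while the other $|B|-1$ summands are each at least $1$, giving $|U|\ge|B|+p-1$. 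Combining, $|A|+|B|\le m-p+1$.

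The main obstacle is the counting step for $|U|$: it is essential to use $A\neq\varnothing$ to confine $B$ to an arc of length $m-p$, since without this confinement the $p$-neighbourhoods of $B$ could wrap around and overlap so heavily that $|U|$ falls short of $|B|+p-1$. The rerouting construction itself is routine, but it must be arranged so as to remain compatible with $M$ in every case (including when the retained portion of $C$ is all of $C$), which is precisely what makes the maximality of $C$ applicable.
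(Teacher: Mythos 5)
Your proof is correct. It runs on the same engine as the paper's: the maximality of $C$ rules out the cycles obtained by rerouting through $P$ via an arc $v_px_k$ and an arc $y_ju_1$, which forces the forward arc of $C$ from $y_j$ to $x_k$ to contain at least $p$ pairs. Where you differ is in how this constraint is converted into the bound. The paper fixes one particular pair $(y_i,x_j)$ whose intermediate path $P^{ij}$ (of $l\geq p$ pairs) contains no further in-neighbours of $u_1$ or out-neighbours of $v_p$, and then pairs up the remaining positions: for each of the $m-l-1$ consecutive pairs $(y_s,x_{s+1})$ outside $P^{ij}$, at most one of $y_su_1$, $v_px_{s+1}$ can be an arc (this is your avoidance constraint in the extreme case $k=j+1$), giving $(m-l-1)+2\leq m-l+1\leq m-p+1$ at once. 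You instead retain the full constraint for all pairs $(j,k)\in B\times A$ -- namely that $A$ avoids the $p$ positions cyclically following each element of $B$ -- and finish with a union-of-cyclic-intervals count, using $A\neq\varnothing$ to confine $B$ to an arc of length $m-p$ and thereby force one gap of at least $p+1$, so that $|U|\geq|B|+p-1$. Your bookkeeping is longer but makes the combinatorial content of maximality fully explicit and is arguably more robust (the paper's argument depends on choosing the neighbour-free arc $P^{ij}$ judiciously); the paper's pairing device is shorter and is reused almost verbatim in Subcase B.2 of the main proof. Your handling of the edge cases (the degenerate interval $j=k$, and $|B|=1$, where the single gap is $m>p$) is correct, and the compatibility check of the rerouted cycle with $M$ is as you describe.
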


\begin{proof}
Write $C=[x_1,y_1,\ldots,x_m,y_m]$, with $x_\nu\in X$ and $y_\nu\in Y$ ($1\leq\nu\leq m$). By assumption, there exist $y_i$ and $x_j$ on $C$ such that $y_iu_1\in A(D)$ and $v_px_j\in A(D)$. Let $(x_{i+1},y_{i+1},\ldots,x_{j-1},y_{j-1})$ be the path, denoted by $P^{ij}$, between $y_i$ and $x_j$ on $C$, traversed according to the orientation of $C$; of order, say, $2l$. Then $l\geq p$, because otherwise the cycle $[v_p,x_j,\ldots,y_i,u_1,v_1,\ldots,u_p]$ would be strictly longer than $C$.

We can choose the $y_i$ and $x_j$ so that $u_1$ is not dominated by any $y_\nu\in V(P^{ij})$, and that $v_p$ does not dominate any $x_\nu\in V(P^{ij})$.
Note that for every pair of vertices $y_s,x_{s+1}$ from $V(C)\setminus V(P^{ij})$ at most one of the arcs $y_su_1$ and $v_px_{s+1}$ belongs to $A(D)$, for else $D$ would contain a cycle
\[
[v_p,x_{s+1},y_{s+1},\ldots,x_s,y_s,u_1,v_1,\ldots,u_p]
\]
strictly longer than $C$. There is precisely $m-l-1$ of such pairs. Accounting for the arcs $y_iu_1$ and $v_px_j$, we get the required estimate
\[
d^+_{V(C)}(v_p)+d^-_{V(C)}(u_1)\leq (m-l-1)+2\leq m-p+1\,.
\]
\end{proof}

\subsection{Proof of Theorem~\ref{thm:main}}
\label{subsec:proof}

Assume then that $D$ satisfies condition $A^*_2$. Choose $M$ a complete matching from $X$ to $Y$, and an oriented cycle $C$, of length $2m$, compatible with $M$ in such a way that $C$ is of maximal length among all the oriented cycles in $D$ compatible with some complete matching from $X$ to $Y$. Write $C=[x_1,y_1,\ldots,x_m,y_m]$, with $x_\nu\in X$ and $y_\nu\in Y$, $1\leq\nu\leq m$.
By Lemma~\ref{lem:3}, $2m\geq a+2$.

We want to show that $m=a$. Suppose otherwise. Then we can choose a path $P$, of order $2p$, contained in $D\setminus V(C)$, compatible with $M$ and of maximal length among such paths in $D\setminus V(C)$. Write $P=(u_1,v_1,\ldots,u_p,v_p)$, with $u_\nu\in X$ and $v_\nu\in Y$, $1\leq\nu\leq p$ (cf. Remark~\ref{rem:1}). Let $R$ denote the remaining vertices of $D$; i.e., $R=V(D)\setminus(V(C)\cup V(P))$. Write $|R|=2r$ for some $r\geq0$. Then
\[
a=m+p+r\quad\mathrm{and}\quad 2p+2r=2a-2m\leq a-2\,.
\]
The remainder of the proof splits into several cases according to the properties of $d^-_{V(C)}(u_1)$ and $d^+_{V(C)}(v_p)$. Note that, by maximality of $P$, we have $d^-_{V(R)}(u_1)=0$ and $d^+_{V(R)}(v_p)=0$.

\subsection*{Case A: $d^-_{V(C)}(u_1)=0$}

\subsection*{Subcase A.1: $d^+_{V(C)}(v_p)>0$} Let then $x_i\in V(C)$ be such that $v_px_i\in A(D)$. It follows from maximality of $C$ that $d^+_{V(P)}(y_{i-1})=0$. In particular, $y_{i-1}u_1\notin A(D)$, and hence $d^+(y_{i-1})+d^-(u_1)\geq a+2$. Therefore
\begin{multline}
\notag
a+2\leq d^+(y_{i-1})+d^-(u_1)=(d^+_{V(C)}(y_{i-1})+d^+_{V(R)}(y_{i-1}))+d^-_{V(P)}(u_1)\\
\leq m+r+p=a\,;
\end{multline}
a contradiction.

\subsection*{Subcase A.2: $d^+_{V(C)}(v_p)=0$} If $v_pu_1\notin A(D)$, then, by assumption,
\[
a+2\leq d^+(v_p)+d^-(u_1)=d^+_{V(P)}(v_p)+d^-_{V(P)}(u_1)\leq 2(p-1)<a\,;
\]
a contradiction. Therefore $v_pu_1\in A(D)$, and so $P$ is, in fact, a cycle. Hence $d^-_{V(R)}(u_i)=0$ and $d^+_{V(R)}(v_j)=0$ for all $u_i,v_j\in V(P)$, by maximality of $P$.

Suppose now that $d^+_{V(C)}(v_j)=0$ for all $v_j\in V(P)$. Then, for any such $v_j$ and $x_i\in V(C)$, we get
\begin{multline}
\notag
a+2\leq d^+(v_j)+d^-(x_i)=d^+_{V(P)}(v_j)+(d^-_{V(C)}(x_i)+d^-_{V(R)}(x_i))\\
\leq p+m+r=a\,;
\end{multline}
a contradiction. Therefore there exist $x_i\in V(C)$ and $v_j\in V(P)$ such that $v_jx_i\in A(D)$. It follows, as in Subcase A.1, that $y_{i-1}u_1\notin A(D)$, and hence
\begin{multline}
\notag
a+2\leq d^+(y_{i-1})+d^-(u_1)=(d^+_{V(C)}(y_{i-1})+d^+_{V(R)}(y_{i-1}))+d^-_{V(P)}(u_1)\\
\leq m+r+p=a\,;
\end{multline}
a contradiction.

\subsection*{Case B: $d^-_{V(C)}(u_1)>0$}

\subsection*{Subcase B.1: $d^+_{V(C)}(v_p)=0$} Let then $y_i\in V(C)$ be such that $y_iu_1\in A(D)$. It follows from maximality of $C$ that $d^-_{V(P)}(x_{i+1})=0$. In particular, $v_px_{i+1}\notin A(D)$, and hence
\begin{multline}
\notag
a+2\leq d^+(v_p)+d^-(x_{i+1})=d^+_{V(P)}(v_p)+(d^-_{V(C)}(x_{i+1})+d^-_{V(R)}(x_{i+1}))\\
\leq p+m+r=a\,;
\end{multline}
a contradiction.

\subsection*{Subcase B.2: $d^+_{V(C)}(v_p)>0$} By Lemma~\ref{lem:4}, $d^+_{V(C)}(v_p)+d^-_{V(C)}(u_1)\leq m-p+1$. If $v_pu_1\notin A(D)$, then
\begin{multline}
\notag
a+2\leq d^+(v_p)+d^-(u_1)=(d^+_{V(C)}(v_p)+d^-_{V(C)}(u_1))+(d^+_{V(P)}(v_p)+d^-_{V(P)}(u_1))\\
\leq(m-p+1)+2(p-1)=m+p-1<a\,;
\end{multline}
a contradiction. Therefore $v_pu_1\in A(D)$, and so $P$ is, in fact, a cycle.

We shall show that $R=\varnothing$ in this case. Suppose otherwise, and let $P'$ be a maximal path in $R$ compatible with $M$. Write $P'=(p_1,\ldots,p_t)$. Then $p_1\in R\cap X$ and $p_t\in R\cap Y$ (see Remark~\ref{rem:1}). Since $P$ is a maximal cycle in $D\setminus V(C)$ compatible with $M$, then $d^-_{V(P)}(p_1)=d^+_{V(P)}(p_t)=0$. Moreover, $d^+_{V(C)}(p_t)+d^-_{V(C)}(p_1)\leq m$, because for every pair of vertices $y_i,x_{i+1}$ on $C$ at most one of the arcs $y_ip_1$ and $p_tx_{i+1}$ exists (by maximality of $C$). Hence
\[
d^+(p_t)+d^-(p_1)=(d^+_{V(C)}(p_t)+d^-_{V(C)}(p_1))+(d^+_{V(R)}(p_t)+d^-_{V(R)}(p_1))\leq m+2r\,,
\]
and so
\begin{multline}
\notag
2a+4\leq(d^+(p_t)+d^-(u_1))+(d^+(v_p)+d^-(p_1))\\
\leq (m+2r)+(d^+_{V(C)}(v_p)+d^-_{V(C)}(u_1))+(d^+_{V(P)}(v_p)+d^-_{V(P)}(u_1))\\
\leq (m+2r)+(m-p+1)+2p=2m+2r+p+1\leq 2m+2r+2p=2a\,;
\end{multline}
a contradiction. We have thus shown that $r=0$, and hence $a=m+p$.

As in the proof of Lemma~\ref{lem:4}, there exist $x_{j_0}$ and $y_{i_0}$ on $C$ such that $y_{i_0}u_1\in A(D)$ and $v_px_{j_0}\in A(D)$. Let $P^{i_0j_0}$ be the path between $y_{i_0}$ and $x_{j_0}$ on $C$, traversed according to the orientation of $C$; of order, say, $2l$. Write $P^{i_0j_0}=(x_{i_0+1},y_{i_0+1},\ldots,x_{j_0-1},y_{j_0-1})$. Then $l\geq p$, because otherwise the cycle
\[
[v_p,x_{j_0},\ldots,y_{i_0},u_1,v_1,\ldots,u_p]
\]
would be strictly longer than $C$.
Further, we can choose the $x_{j_0}$ and $y_{i_0}$ so that
\begin{equation}
\label{eq:7}
y_\nu u_1\notin A(D)\mathrm{\ for\ all\ }y_\nu\in P^{i_0j_0}\quad\mathrm{and}\quad v_px_\nu\notin A(D)\mathrm{\ for\ all\ }x_\nu\in P^{i_0j_0}\,.
\end{equation}
As in the proof of Lemma~\ref{lem:4}, it follows that $d^+_{V(C)}(v_p)+d^-_{V(C)}(u_1)\leq m-l+1$, and hence
\begin{multline}
\label{eq:3}
d^+(v_p)+d^-(u_1)=(d^+_{V(C)}(v_p)+d^-_{V(C)}(u_1))+(d^+_{V(P)}(v_p)+d^-_{V(P)}(u_1))\\
\leq (m-l+1)+2p=a-l+p+1\,.
\end{multline}
By~\eqref{eq:7}, we have $v_px_{i_0+1}\notin A(D)$ and $y_{j_0-1}u_1\notin A(D)$. Hence, and by \eqref{eq:3},
\begin{multline}
\notag
2a+4\leq(d^+(v_p)+d^-(x_{i_0+1}))+(d^+(y_{j_0-1})+d^-(u_1))\\
\leq(d^+(y_{j_0-1})+d^-(x_{i_0+1}))+(a-l+p+1)\,,
\end{multline}
and thus
\begin{equation}
\label{eq:4}
d^+(y_{j_0-1})+d^-(x_{i_0+1})\geq a+l-p+3=m+l+3\,.
\end{equation}
Note that $d^+_{V(P)}(y_{j_0-1})=d^-_{V(P)}(x_{i_0+1})=0$, which follows from the maximality of $C$ and the fact that $P$ is a cycle. Therefore $d^+(y_{j_0-1})=d^+_{V(C)}(y_{j_0-1})$ and $d^-(x_{i_0+1})=d^-_{V(C)}(x_{i_0+1})$, and so, by \eqref{eq:4}, we get
\begin{equation}
\label{eq:5}
d^+_{V(C)}(y_{j_0-1})+d^-_{V(C)}(x_{i_0+1})\geq m+l+3>(m-l-1)+2l+2\,.
\end{equation}
Since $y_{j_0-1}$ and $x_{i_0+1}$ have together at most $2l+2$ neighbours in $V(P^{i_0j_0})\cup\{y_{i_0}\}\cup\{x_{j_0}\}$, then \eqref{eq:5} implies that there exists a pair of vertices $y_s,x_{s+1}$ in $V(C)\setminus(V(P^{i_0j_0})\cup\{y_{i_0}\}\cup\{x_{j_0}\})$ such that $y_sx_{i_0+1}\in A(D)$ and $y_{j_0-1}x_{s+1}\in A(D)$. But then $D$ contains a hamiltonian cycle
\[
[u_1,\ldots,v_p,x_{j_0},\ldots,y_s,x_{i_0+1},\ldots,y_{j_0-1},x_{s+1},\ldots,y_{i_0}]\,.
\]
This contradiction completes the proof of the theorem.
\qed

\begin{remark}
\label{rem:weaker}
Note that the proof of Theorem~\ref{thm:main}, in fact, goes under considerably weaker assumptions. Namely, it suffices to assume that the digraph $D$ contains a complete matching from $X$ to $Y$, and condition $A^*_2$ is satisfied for every pair of vertices $u$ and $v$ such that $u\in X$, $v\in Y$ and $vu\notin A(D)$. That is, we do not need to require any degree condition on pairs of vertices $u$ and $v$ such that $u\in X$, $v\in Y$ and $uv\notin A(D)$. Of course, symmetrically, it suffices to assume a complete matching from $Y$ to $X$ and condition $A^*_2$ satisfied for every pair of vertices $u$ and $v$ such that $u\in X$, $v\in Y$ and $uv\notin A(D)$.
\end{remark}


\bibliographystyle{amsplain}

\end{document}